\newtheorem{thm}{Theorem}[section]
\newtheorem*{thm*}{Theorem}
\newtheorem{lemma}[thm]{Lemma}
\newcommand{\beq}{\begin{equation}}
\newcommand{\eeq}{\end{equation}}
\newtheorem{theorem}{Theorem}%[section]
\definecolor{pink}{rgb}{1,.2,.6}
\definecolor{orange}{rgb}{0.7,0.3,0}
\definecolor{blue}{rgb}{.2,.6,.75}
\definecolor{green}{rgb}{.4,.7,.4}
\definecolor{purple}{RGB}{127,0,255}
\begin{document}

\numberwithin{equation}{section}

\title{The error term in the Ces{\`a}ro mean of the prime pair singular series}

\author[Goldston]{D. A. Goldston}
\address{Department of Mathematics and Statistics, San Jose State University}
\email{daniel.goldston@sjsu.edu}

\author[Suriajaya]{Ade Irma Suriajaya}
\address{Faculty of Mathematics, Kyushu University}
\email{adeirmasuriajaya@math.kyushu-u.ac.jp}
\keywords{prime numbers, Riemann zeta-function, singular series}
\subjclass[2010]{11N05, 11M26}
\thanks{$^{*}$ The second author was supported by JSPS KAKENHI Grant Number 18K13400.} 

\date{\today}

\begin{abstract}
We show that the error term in the asymptotic formula for the Ces{\`a}ro mean of the singular series in the Goldbach and the Hardy-Littlewood prime-pair conjectures cannot be too small and oscillates. \end{abstract}

%%%%%%%%%%%%%%%%%%%%%%%%%%%%%%%%
\maketitle
%%%%%%%%%%%%%%%%%%%%%%%%%%%%%%%%

%%%%%%%%%%%%%%%%%%%%%%%%%%%%%%%%
\section{Introduction } 

Hardy and Littlewood \cite{HardyLittlewood1919} conjectured in 1919 an asymptotic formula for the number of pairs of primes differing by $k$, see also \cite{HardyLittlewood1922}. Hardy and Littlewood's conjecture is equivalent to, for $k$ even and $k\neq 0$,

\begin{equation}\label{Psi2} \psi_2(N,k):= \sum_{\substack{n,n^\prime\le N\\ n^\prime-n=k}}\Lambda(n)\Lambda(n^\prime) \sim \mathfrak{S}(k) (N-|k|) \quad \text{as} \ \ N\to \infty, \end{equation}
where 
\begin{equation}\label{SingProd}
\mathfrak{S}(k) = \begin{cases}
{\displaystyle 2 C_{2} \prod_{\substack{ p \mid k \\ p > 2}} \!\left(\frac{p - 1}{p - 2}\right)} & \mbox{if $k$ is even, $k\neq 0$}, \\
 0 & \mbox{if $k$ is odd}\end{cases}
\end{equation}
and
\begin{equation}\label{eq1.3}
C_{2}
 = \prod_{p > 2}\! \left( 1 - \frac{1}{(p - 1)^{2}}\right)
 = 0.66016\ldots.
\end{equation}
For odd $k$ the sum in \eqref{Psi2} %{eq1.1}
has non-zero terms only when $n$ or $n^\prime$ is a power of 2, so $\psi_2(N,k) = O((\log N)^2).$ For the Goldbach problem Hardy and Littlewood conjectured an analogous formula for the number of ways an even number $k$ can be expressed as the sum of two primes, which also includes the arithmetic function $\mathfrak{S}(k)$.

The function $\mathfrak{S}(k)$ is called the singular series, a name given by Hardy and Littlewood because it first occurred as the series 
\begin{equation} \label{eq1.4} \mathfrak{S}(k) = \sum_{q=1}^\infty \frac{\mu(q)^2}{\phi(q)^2}c_q(-k),\end{equation}
where the Ramanujan sum $c_q(n)$ is defined by
\begin{equation} \label{eq1.5} c_q(n) = \!\sum_{\substack{1\le a\le q\\ (a,q)=1}}\!\!\!\;e\Big(\frac{an}{q}\Big), \quad \ \ e(\alpha)=e^{2\pi i \alpha}. \end{equation} 
Some well-known properties of $c_q(n)$ (see, e.g., \cite[Theorem 4.1]{MontgomeryVaughan2007}) are that $c_q(-n) = c_q(n)$, $c_q(n)$
is a multiplicative function of $q$, and 
\begin{equation}\label{eq1.6} c_q(n) = \sum_{\substack{d|n\\d|q}} d\mu\!\left(\frac{q}{d}\right)\! = \frac{ \mu\big(\frac{q}{(n,q)}\big)\phi(q) }{\phi\big(\frac{q}{(n,q)}\big)}.\end{equation} 

Since the singular series is a sum of multiplicative functions in $q$, it is easy to verify that \eqref{eq1.4} is equivalent to the product in \eqref{SingProd}. %{eq1.2}.
The sum in \eqref{eq1.4} is a Ramanujan series; many arithmetic functions can be expanded into these series which have the property that the first term $q=1$ is the average or expected value of the arithmetic function. Thus we see that the $q=1$ term in \eqref{eq1.4} says that $\mathfrak{S}(k)$ has average value $1$. If we consider the first two terms we have
\[ \mathfrak{S}(k) = 1+ e\Big(\!\!\!\:-\frac{k}{2}\!\;\Big)\!\!\: +\sum_{q=3}^\infty \frac{\mu(q)^2}{\phi(q)^2}c_q(-k),\]
and therefore we obtain the refinement that on average $\mathfrak{S}(k)$ is $0$ if $k$ is odd and is $2$ if $k$ is even.

%The average of $\mathfrak{S}(k)$ has been studied by Hardy and Littlewood in their initial paper, and this has been a subject of interest by many authors. 
The average of $\mathfrak{S}(k)$ was studied by Hardy and Littlewood in their initial paper and has been the subject of much further work. The best result known \cite{FriedlanderGoldston1995} is 
\begin{equation} \label{SingAve} \sum_{k\le x} \mathfrak{S}(k) = x - \frac12 \log x +O((\log x)^{\frac23})).
\end{equation}
In applications to the number of primes in short intervals, the Ces{\`a}ro mean naturally occurs, and we have by \cite[Proposition 2]{FriedlanderGoldston1995} and
\cite{MontgomerySound1999}
\begin{equation}\label{SingCAve} S(x) := \sum_{k\le x}(x-k) \mathfrak{S}(k) =\frac12 x ^2 - \frac12 x \log x + \frac12( 1-\gamma -\log 2\pi) x + O( x^{\frac12 + \epsilon}). \end{equation}
Notice neither one of these results implies the other result.

Vaughan \cite{Vaughan2001} refined the error term in \eqref{SingCAve} and proved it is 
\[ \ll x^{\frac12} \exp\left( -c \frac{( \log 2x)^{\frac35}}{(\log\log 3x)^{\frac15}} \right), \]
for some positive constant $c$. On the Riemann Hypothesis he showed this error is $\ll x^{\frac5{12}+\varepsilon}$.
Vaughan actually proved these results for a more general singular series needed for primes in arithmetic progressions. 

Vaughan's results show that the size of the error term in \eqref{SingCAve} depends on a zero-free region of the Riemann zeta-function, but there is no barrier at $x^{1/2}$ if the Riemann Hypothesis is true. If one assumes the Riemann Hypothesis, one heuristically finds that each complex zero of the zeta-function makes a contribution of size $\asymp x^{1/4}$, but convergence issues make it difficult to prove an explicit formula for $S(x)$ involving zeros. This problem can be avoided with smoother weights than the Ces{\`a}ro mean, and in \cite{G-S20} we obtain an explicit formula when $m\ge 2$ for
\begin{equation} S_m(x) = \sum_{k\le x} (x-k)^m \mathfrak{S}(k), \end{equation}
the Riesz mean for the singular series.

In this note we prove unconditionally an omega result for the error term that is obtained by Landau's oscillation theorem using the first zero of the zeta-function on the critical line. Let $E(x)$ be defined by the equation
\begin{equation}\label{E(x)} S(x) =\frac12 x ^2 - \frac12 x \log x + \frac12( 1-\gamma -\log 2\pi) x + E(x). \end{equation} 

\begin{theorem} We have, as $x\to \infty$, 
\begin{equation} \label{OmegaE} E(x) = \Omega_{\pm}(x^{1/4}) . \end{equation}
\end{theorem}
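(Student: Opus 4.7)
The plan is to apply Landau's oscillation theorem in the classical style (cf.\ the analogous $\Omega(x^{1/4})$ result for the Gauss circle problem), exploiting the unconditional fact that the first nontrivial zero $\rho_1 = \tfrac12 + i\gamma_1$ of $\zeta$ is known by rigorous computation to lie on the critical line. The key ingredients are a meromorphic representation of the Mellin transform of $E(x)$ whose rightmost singularities are at $s = \rho/2$ for nontrivial zeros $\rho$ of $\zeta$, together with the standard triangle-inequality principal-part comparison that powers Landau-type arguments.

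\emph{Step 1 (Mellin representation).} From \eqref{eq1.4} one computes $D(s) := \sum_{k\geq 1} \mathfrak{S}(k)/k^s = \zeta(s)\prod_p\bigl(1 + (p^{1-s}-1)/(p-1)^2\bigr)$, and by extracting the Euler product for $\zeta(s+1)$ from the product one finds $D(s) = A(s)\zeta(s)\zeta(s+1)$ with $A(s)$ absolutely convergent and nonvanishing in $\Re s > -1$. The Perron identity
\[
S(x) = \frac{1}{2\pi i}\int_{(c)} D(s)\frac{x^{s+1}}{s(s+1)}\,ds\qquad(c>1),
\]
after shifting the contour past the poles at $s = 1$ and $s = 0$, produces the main terms of \eqref{E(x)}; continuing the shift leftward, and using the functional equation of $\zeta$ to move the remaining $\zeta$-factors into a region of convergence before shifting once more, yields an integral representation of $E(x)$ in which each nontrivial zero $\rho$ of $\zeta$ contributes a term of size $x^{\rho/2}$. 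Equivalently, the Mellin transform
\[
F_E(s) := \int_1^\infty E(x)\,x^{-s-1}\,dx,
\]
convergent in $\Re s > \tfrac12 + \varepsilon$ by Vaughan's bound, extends meromorphically across the line $\Re s = \tfrac14$ with simple poles precisely at $s = \rho/2$, the residue $R_1$ at $s = \rho_1/2 = \tfrac14 + i\gamma_1/2$ being an explicit nonzero constant.

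\emph{Step 2 (Landau's argument).} To prove $E(x) = \Omega_+(x^{1/4})$, assume for contradiction that $\limsup_{x\to\infty} E(x)/x^{1/4} \leq 0$. Then for every $\varepsilon > 0$ there exists $x_0 = x_0(\varepsilon)$ with $h_\varepsilon(x) := \varepsilon x^{1/4} - E(x) \geq 0$ for $x \geq x_0$. Its Mellin transform
\[
H_\varepsilon(s) = \int_{x_0}^\infty h_\varepsilon(x)\,x^{-s-1}\,dx = \varepsilon\cdot\frac{x_0^{1/4 - s}}{s - \tfrac14} - \widetilde F_E(s),
\]
with $\widetilde F_E$ differing from $F_E$ by the entire function $\int_1^{x_0} E(x)\,x^{-s-1}\,dx$ and hence sharing the same meromorphic continuation, has a simple real-axis pole at $s = \tfrac14$ of residue $\varepsilon$ --- the only real-axis singularity of $H_\varepsilon$ in $\Re s \geq \tfrac14$, since all $\rho/2$ are non-real --- together with a simple pole at $s = \tfrac14 + i\gamma_1/2$ of residue $-R_1$. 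Nonnegativity of $h_\varepsilon$ gives $|H_\varepsilon(\sigma + it)| \leq H_\varepsilon(\sigma)$ throughout the half-plane of convergence; at $t = \gamma_1/2$, both sides blow up at rate $(\sigma - \tfrac14)^{-1}$ as $\sigma \downarrow \tfrac14$, and comparing principal parts forces $|R_1| \leq \varepsilon$. Taking $\varepsilon < |R_1|$ yields a contradiction, proving the $\Omega_+$ part; the $\Omega_-$ half is symmetric, applied to $h_\varepsilon(x) := E(x) + \varepsilon x^{1/4} \geq 0$.

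\emph{Main obstacle.} The bulk of the work is Step 1: justifying the contour shifts across the critical strip using standard convexity bounds for $\zeta$ on vertical lines, and computing the residues $R_\rho$ explicitly so that their nonvanishing at $\rho = \rho_1$ can be verified --- this reduces to simplicity of the first zero of $\zeta$ together with the nonvanishing of $\zeta$ and $A$ at certain auxiliary points, all known unconditionally.
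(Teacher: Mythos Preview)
Your Step~2 is essentially the paper's argument: form the Mellin transform of $E(x)$, locate its rightmost non-real singularities, and run Landau's oscillation theorem against the first zero $\rho_1$. That part is fine, and the paper explicitly stresses (as you implicitly do in Step~2) that no contour shifting or explicit formula is needed---only the analytic continuation of the Mellin transform and knowledge of its singularities. So your ``Main obstacle'' paragraph is a red herring: one never has to justify moving a contour across the critical strip.

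The genuine gap is in Step~1, and it is exactly the point that drives the whole result. Your claim that $D(s)=A(s)\zeta(s)\zeta(s+1)$ with $A(s)$ \emph{absolutely convergent and nonvanishing in $\Re s>-1$} is false. A direct computation from your own Euler product gives
\[
A(s)=\prod_p\Bigl(1-\frac{(1-p^{-s})^2}{(p-1)^2}\Bigr),
\]
and for $\sigma<0$ the local factor is $1-p^{-2s-2}+O(p^{-1})$, so the product converges absolutely only for $\Re s>-\tfrac12$. To push further left one \emph{must} extract the factor $1/\zeta(2s+2)$; the paper's Lemma~2.1 does exactly this and obtains
\[
D(s)=\Bigl(\frac{4C_2}{2^{s+1}+1}\Bigr)\frac{\zeta(s)\zeta(s+1)}{\zeta(2s+2)}\,\mathcal G(s),
\]
with $\mathcal G(s)$ an Euler product convergent for $\Re s>-1$. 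The poles of $F_E(s)$ at $s=\rho/2$ (equivalently, of the paper's $\mathcal E(s)$ at $s=\rho/2-1$) come precisely from this $1/\zeta(2s+2)$ factor and from nowhere else. Your factorization, as stated, has $A(s)\zeta(s)\zeta(s+1)$ with $A$ analytic---this produces \emph{no} singularities at $\rho/2$ at all, so your Step~1 does not supply the input that Step~2 needs. The appeal to the functional equation does not repair this: the functional equation relates $\zeta(s)$ to $\zeta(1-s)$ and introduces no $1/\zeta$ factor, hence no poles at half-zeros.

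In short: replace your incorrect assertion about $A(s)$ by the factorization with $1/\zeta(2s+2)$ made explicit (the paper's Lemma~2.1), drop the contour-shift discussion entirely, and your Step~2 then goes through verbatim and coincides with the paper's proof of Theorem~2.
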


It is somewhat surprising that this result was not discovered earlier; this might be due to not having an explicit formula available for the Ces{\`a}ro mean of the singular series. In most applications of Landau's theorem, such as the error in the prime number theorem and the Mertens Hypothesis, one has an explicit formula as a guide. Not having that guide makes it easier to forget that Landau's theorem depends only on the singularities of the generating function, and not on proving the validity of an explicit formula.

If we assume the imaginary parts of the zeros of the Riemann zeta-function are linearly independent over the integers, or even over a small finite set of integers, we can improve the result in Theorem 1 and prove
\[
\limsup_{x\to\infty} \frac{E(x)}{x^{\frac14}} =\infty
\qquad \text{and} \qquad
\liminf_{x\to\infty} \frac{E(x)}{x^{\frac14}}= -\infty.
\]
This is done in \cite{G-S20}.

%%%%%%%%%%%%%%%%%%%%%%%%%%%%%%%%
\section{The Singular Series Generating Functions %$F(s)$ and $G(s)$}
\texorpdfstring{$F(s)$}{F(s)} and \texorpdfstring{$G(s)$}{G(s)}}

For $k\ge 1$, define
\begin{equation} \label{g(k)} g(k) = \prod_{\substack{ p \mid k \\ p > 2}} \!\left(\frac{p - 1}{p - 2}\right), \end{equation}
where we use the convention that an empty product is equal to 1. Thus $g(1)=g(2)=1$, $g(k) \ge 1$, $g(2k)=g(k)$, and $g(k)$ is a multiplicative function with $g(p^m) = g(p)$. 

We define the singular series generating functions
\begin{equation} \label{F and G} F(s) = \sum_{k=1}^\infty \frac{\mathfrak{S}(k)}{k^s}, \qquad \text{and} \quad G(s) = \sum_{k=1}^\infty \frac{g(k)}{k^s} , \end{equation}
where as usual $s=\sigma + i t$. $G(s)$ is a little easier to work with than $F(s)$, and since
\begin{equation} \label{F-G} F(s) = \sum_{k=1}^\infty \frac{\mathfrak{S}(2k)}{(2k)^s} = \sum_{k=1}^\infty \frac{2C_2g(2k)}{(2k)^s} = \frac{2C_2}{2^s} G(s),\end{equation}
it is easy to recover $F(s)$ from $G(s)$.

We first note the useful estimate
\begin{equation} \label{gbound} \mathfrak{S}(k) \ll g(k) \ll \log\log 3k .\end{equation}
To obtain this, note
\[ g(k) = \prod_{\substack{ p \mid k \\ p > 2}} \!\left(1+\frac{1}{p - 2}\right),\]
and we should expect that $g(k)$ behaves rather similarly to $\frac{k}{\phi(k)}$ where $\phi(k)$ is the Euler phi-function, which has the product formula
\[ \phi(k) = k\prod_{ p \mid k} \!\left(1-\frac{1}{p }\right).\]
The bound \eqref{gbound} now follows from
\[ \begin{split} \frac{\phi(k)g(k)}{k} & \le \prod_{\substack{ p \mid k \\ p > 2}} \!\left(1-\frac{1}{p }\right)\left(1+\frac{1}{p-2 }\right) \\& = \prod_{\substack{ p \mid k \\ p > 2}} \!\left(1+\frac{1}{p(p-2) }\right) \\&
\le \prod_{ j=3}^\infty \!\left(1+\frac{1}{j(j-2) } \right) = 2,\end{split} \]
and the well known estimate 
\[ \frac{k}{\phi(k)} \ll \log\log3 k\]
obtained from the Mertens formula. 

From \eqref{gbound}, we see that $F(s)$ and $G(s)$ are defined and converge for $\sigma >1$. We now obtain the analytic continuation of $G(s)$ to the half-plane $\sigma >-1$.
\begin{lemma} \label{G(s)lem} We have, for $\sigma >-1$, 
\begin{equation} \label{G(s)lemma}
F(s) 
= \left(\frac{4C_2}{2^{s+1}+1} \right) \frac{ \zeta(s)\zeta(s+1)}{\zeta(2s+2)}\mathcal{G}(s), \quad \text{and} \quad G(s) 
= \left(\frac{2^{s+1}}{2^{s+1}+1} \right) \frac{ \zeta(s)\zeta(s+1)}{\zeta(2s+2)}\mathcal{G}(s),\end{equation}
where 
\begin{equation} \label{Gcal} \mathcal{G}(s) = \prod_{p>2}\left(1+\frac{2}{(p-2)(p^{s+1}+1)} \right). \end{equation}
\end{lemma}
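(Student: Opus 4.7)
The plan is to compute the Dirichlet series $G(s)$ via its Euler product and identify the quotient $G(s)\,\zeta(2s+2)/(\zeta(s)\zeta(s+1))$ as a product that converges on the larger half-plane $\sigma>-1$. Since $g$ is multiplicative with $g(p^m)=g(p)$ for every $m\ge 1$, summing the geometric series at each prime gives $G(s)=\prod_p\bigl(1+g(p)/(p^s-1)\bigr)$. Substituting $g(2)=1$ and $g(p)=(p-1)/(p-2)$ for odd $p$ and simplifying, the Euler factor at $p=2$ becomes $2^s/(2^s-1)$, while at each odd prime it becomes $\bigl((p-2)p^s+1\bigr)/\bigl((p-2)(p^s-1)\bigr)$.

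Next, using $1-p^{-2s-2}=(1-p^{-s-1})(1+p^{-s-1})$ I would write
\[
\frac{\zeta(s)\zeta(s+1)}{\zeta(2s+2)}=\prod_p\frac{1+p^{-s-1}}{1-p^{-s}}=\prod_p\frac{p^{s+1}+1}{p(p^s-1)}.
\]
Dividing $G(s)$ by this factor prime-by-prime, the $p=2$ contribution collapses to $2^{s+1}/(2^{s+1}+1)$; for odd $p$, the elementary identity $(p-2)p^{s+1}+p=(p-2)(p^{s+1}+1)+2$ turns the quotient of Euler factors into $1+2/\bigl((p-2)(p^{s+1}+1)\bigr)$, which is exactly the Euler factor of $\mathcal{G}(s)$.

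It then remains to verify convergence of the product defining $\mathcal{G}(s)$ on $\sigma>-1$. This is routine: each factor is $1+O(p^{-\sigma-2})$ uniformly on compact subsets of that half-plane, and $\sum_p p^{-\sigma-2}$ converges whenever $\sigma>-1$, so $\mathcal{G}(s)$ is holomorphic and non-vanishing there. This yields the formula for $G(s)$ in \eqref{G(s)lemma}; the corresponding formula for $F(s)$ follows immediately from $F(s)=(2C_2/2^s)G(s)$ in \eqref{F-G}, using $(2C_2/2^s)\cdot 2^{s+1}/(2^{s+1}+1)=4C_2/(2^{s+1}+1)$. I do not anticipate any serious obstacles; the only point to watch is keeping the $p=2$ Euler factor separated correctly throughout, since the definition of $g$ (and hence the prime-by-prime factorization) is genuinely different at that prime.
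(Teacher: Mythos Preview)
Your proof is correct and follows essentially the same approach as the paper: both compute the Euler product of $G(s)$, factor out the combination $\zeta(s)\zeta(s+1)/\zeta(2s+2)$, and verify that the residual product $\mathcal{G}(s)$ converges absolutely for $\sigma>-1$. The only cosmetic difference is that the paper extracts the zeta factors in two stages (first $\zeta(s)$, then $\zeta(s+1)/\zeta(2s+2)$), explaining at each step how the asymptotic size of the Euler factor dictates which zeta factor to remove next, whereas you divide by the full combination in one shot; the underlying algebra is identical.
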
 
The formula \eqref{G(s)lemma} is due to Vaughan \cite{Vaughan2001} who also showed that the line $\sigma=-1$ is a natural boundary for $G(s)$.

\begin{proof}[Proof of \eqref{G(s)lemma}]
Since $g(k)$ is multiplicative, we have
\[ \begin{split} G(s) &= \prod_p \left(1 + \sum_{r=1}^\infty\frac{g(p^r)}{p^{rs}}\right) \\ &=\left(1 + \sum_{r=1}^\infty \frac{1}{2^{rs}}\right)\prod_{p >2} \left(1 + \left(\frac{p-1}{p-2}\right)\sum_{r=1}^\infty\frac{1}{p^{rs}}\right) \\&
=\left( 1-\frac{1}{2^s}\right)^{-1} \prod_{p >2} \left(1 + \left(\frac{p-1}{p-2}\right)\frac{1}{p^s-1}\right). \end{split}\]
This last product is $\ll \prod_p( 1+ O(p^{-\sigma}))$ and therefore converges for $\sigma >1$. We now write this last equation as 
\begin{equation} \label{g_1} G(s) = \left( 1-\frac{1}{2^s}\right)^{-1} \prod_{p >2}\left(1+ g_1(p,s)\right), \qquad \text{where} \quad g_1(p,s) = \left(\frac{p-1}{p-2}\right) \frac{1}{p^s-1}. \end{equation}
Our goal is to pull out zeta-function factors from this Euler product and obtain successive new Euler products that converge for smaller $\sigma$. Recall for the Riemann zeta-function we have, for $\sigma>1$, 
\begin{equation} \label{zeta} \zeta(s) = \prod_p \left( 1- \frac{1}{p^s}\right)^{-1} .\end{equation}
From this relation we see that for real $\ell$ and $\sigma \to 1^+$ that 
\begin{equation} \label{zetafactor} \prod_p \left( 1+ \frac{\ell}{p^s}\right) \sim \zeta(s)^\ell , \end{equation}
which tells us the dominant zeta-factor we need to approximate this Euler product.
In what follows we will use the relation, for $\sigma >1$,
\begin{equation} \label{zetainsert} \left(1-\frac{1}{2^{s}} \right) \zeta(s) \prod_{p>2} \left( 1- \frac{1}{p^s}\right) =1 \end{equation}
which can be easily inserted into equations. We will also use the formula, for $\sigma >1$,
\begin{equation} \label{zetainsert_sqf}
\left(1+\frac{1}{2^{s}} \right) \frac{\zeta(2s)}{\zeta(s)} \prod_{p>2} \left( 1+ \frac{1}{p^s}\right) = 1
\end{equation}
It turns out that while we could use \eqref{zetainsert} exclusively, by using \eqref{zetainsert_sqf} at the right time we can skip steps in arriving at the final formulas. Once one finds the final formula by any method, it is then easy to see how to apply \eqref{zetainsert} and \eqref{zetainsert_sqf} optimally.

Returning now to \eqref{g_1}, we see for $\sigma >0$ that as $p\to \infty$,
\[ g_1(p, s) \sim \frac{1}{p^s}, \]
and therefore we need to insert a factor of $\zeta(s)$ into this equation. Applying \eqref{zetainsert} in \eqref{g_1} we have 
\begin{equation} \label{G(s)g_2} \begin{split} G(s) &= \zeta(s) \prod_{p>2} \left(1- \frac{1}{p^s}\right)\left(1+ g_1(p,s)\right) \\ &
= \zeta(s) \prod_{p >2}\left(1+ g_2(p,s)\right), \end{split} \end{equation}
where
\[ 1+ g_2(p,s) = \left(1- \frac{1}{p^s}\right)\left(1+ \left(\frac{p-1}{p-2}\right) \frac{1}{p^s-1}\right)\]
and a calculation gives
\begin{equation} \label{g_2} g_2(p,s) = \frac{1}{(p-2)p^s} . \end{equation}
Thus the Euler product in \eqref{G(s)g_2} converges and gives the analytic continuation of $G(s)$ for $\sigma >0$.

Next, for $p\to \infty$
\[ g_2(p,s) \sim \frac{ 1}{p^{s+1}}, \]
and therefore we need to insert a factor of $\zeta(s+1)$. Using the reciprocal of \eqref{zetainsert_sqf} with $s$ replaced with $ s+1$ in \eqref{G(s)g_2} we obtain
\begin{equation} \label{G(s)g_3}\begin{split} G(s) &= \left(1+\frac{1}{2^{s+1}} \right)^{-1} \frac{\zeta(s)\zeta(s+1)}{\zeta(2s+2)} \prod_{p>2} \left( 1+ \frac{1}{p^{s+1}}\right)^{-1}\left(1+ g_2(p,s)\right) \\ &
= \left(\frac{2^{s+1}}{2^{s+1}+1} \right) \frac{\zeta(s)\zeta(s+1)}{\zeta(2s+2)} \prod_{p>2}\left(1+ g_3(p,s)\right),\end{split} \end{equation}
where
\[ 1+ g_3(p,s) = \left(1+ \frac{1}{p^{s+1}}\right)^{-1}\left(1+\frac{1}{(p-2)p^s}\right), \]
and a calculation gives 
\begin{equation} \label{g_3} g_3(p,s) = \frac{2}{(p-2)(p^{s+1}+1)} .\end{equation}
Here the Euler product in \eqref{G(s)g_3} converges for $\sigma >-1$, which proves 
\eqref{G(s)lemma} and the lemma.
\end{proof}

%%%%%%%%%%%%%%%%%%%%%%%%%%%%%%%%
\section{The Mellin Transform of \texorpdfstring{$E(x)$}{E(x)}} 
The starting point for proving \eqref{SingCAve} is the formula, for $c>1$, 
\begin{equation} \label{Scontour} S(x) = \frac{1}{2\pi i} \int_{c-i\infty}^{c+i\infty} F(s) \frac{x^{s+1}}{s(s+1)} \, ds
\end{equation}
This expresses $S(x)$ as the inverse Mellin transform of $F(s)$, and one now moves the contour to the left and picks up main terms by the residue theorem. This works for $\sigma>-\frac12$, but then, even assuming the Riemann Hypothesis, convergence issues create obstacles for bounding the error term, see \cite{Vaughan2001}.
However, in obtaining oscillation theorems one can proceed in a more elementary way and avoid these issues. 

We now embark on the first step in the proof of Theorem 1 which makes no use of either \eqref{SingCAve} or \eqref{Scontour}. We first compute the Mellin transform of $S(x)$. Since $S(x) \ll x^2\log\log 3x$ by \eqref{gbound}, we have for $\sigma >1$,
\[ \int_1^\infty\frac{S(x)}{x^{s+2}}\, dx = \sum_{k=1}^\infty\mathfrak{S}(k) \int_k^\infty \frac{x-k}{x^{s+2}} \, dx = \frac{1}{s(s+1) } F(s) .\]
We now write
\begin{equation}\label{Edef} E(x) = S(x) - A x^2 -Bx\log x -C x, \end{equation}
where $A$, $B$, and $C$ are constants which will be determined in the course of the proof. Thus we have, for $\sigma >1$, 
\begin{equation} \label{Emellin} \mathcal{E}(s) := \int_1^\infty\frac{E(x)}{x^{s+2}}\, dx = \frac{F(s)}{s(s+1) } - \frac{A}{s-1} -\frac{B}{s^2}-\frac{ C}{s}.\end{equation}
We see that Lemma 2.1 gives the meromorphic continuation of $F(s)$ to the half-plane $\sigma > -1$. Thus for $\sigma >-1$, 
$\frac{F(s)}{s(s+1) } $ has a simple pole at $s=1$ from $\zeta(s)$, a double pole at $s=0$ from $\zeta(s+1)/s$, and poles of order $m_\rho$ at $s= \rho/2-1$ from $1/\zeta(2s+2)$, where $\rho = \beta +i\gamma$ denotes a complex zero of $\zeta(s)$ with multiplicity $m_\rho$. All of these poles from zeta-zeros are in the strip $-1 < \sigma < -1/2$. 
We now choose $A$, $B$, and $C$ so that $\mathcal{E}(s)$ is analytic for $ \sigma \ge -1/2$ and therefore only has the same poles as $F(s)$ at the points $s=\rho/2-1$. 

Clearly $A$ is the residue of the simple pole at $s=1$ of $\frac{F(s)}{s(s+1)}$, and therefore, since $\lim_{s\to 1} (s-1)\zeta(s)=1$, we have from Lemma 2.1
\begin{equation}\label{A} \begin{split} A &= \mbox{\text Res}(\frac{F(s)}{s(s+1)};s=1)\\ &
= \lim_{s\to1} \frac{(s-1)F(s)}{s(s+1)} \\ &
= \frac12\left(\frac{4C_2\zeta(2) \mathcal{G}(1)}{5\zeta(4)}\right) \\ &
= \frac12, \end{split} \end{equation}
since 
\[ \begin{split} \frac{4\zeta(2) \mathcal{G}(1)}{5\zeta(4)} &= \prod_{p>2} 
\left( 1-\frac{1}{p^2}\right)^{-1} \left( 1-\frac{1}{p^4}\right)\left(1+\frac{2}{(p-2)(p^2+1)} \right)\\ &
=\prod_{p>2} 
\left(\frac{(p-1)^2}{p(p-2)}\right) \\ &
= 1/C_2 . \end{split} \]

Next, for the double pole at $s=0$ of $\zeta(s+1)/s$, we let 
\[ U(s) := \frac{s}{\zeta(s+1)}\left( \frac{F(s)}{s(s+1)} \right)
= \frac{4C_2 \zeta(s)\mathcal{G}(s)}{(2^{s+1}+1)\zeta(2s+2)(s+1)}.\]
Since $U(s)$ is analytic at $s=0$ and $\zeta(s) = \frac{1}{s-1} + \gamma + O(|s-1|)$ for $s$ near $1$ (see \cite[Eq. 2.1.16]{Titchmarsh}) we obtain the Laurent expansion around $s=0$
\[\begin{split} \frac{F(s)}{s(s+1)} &= \frac{\zeta(s+1)}{s} U(s)\\& = \left(\frac{1}{s^2} + \frac{\gamma}{s} + O(1)\right)\left( U(0) + sU'(0) + O(|s|^2)\right)\\&
= \frac{U(0)}{s^2} + \frac{\gamma U(0) +U'(0)}{s} +O(1), \end{split} \]
and therefore by \eqref{Emellin} we choose 
\begin{equation} \label{BCchoice} B= U(0), \qquad \text{and} \quad C = \left(\frac{U'(0)}{U(0)} +\gamma\right) U(0). \end{equation}
Here $\gamma \simeq 0.577216$ is the Euler-Mascheroni constant. We will make use below of the special values 
$$
\frac{\zeta'(0)}{\zeta(0)} = \log{2\pi}, \qquad \text{and} \qquad \zeta(0) = - \frac12, $$
see \cite[Sec. 2.4]{Titchmarsh}.

For $B$, we have
\begin{equation} \label{B}
B=U(0) = \frac{4C_2 \zeta(0)\mathcal{G}(0)}{3\zeta(2)}
= -\frac12, 
\end{equation}
since 
\[ \begin{split} \frac{4 \mathcal{G}(0)}{3\zeta(2)} &= \prod_{p>2} 
\left( 1-\frac{1}{p^2}\right) \left(1+\frac{2}{(p-2)(p+1)} \right)\\ &
=\prod_{p>2} 
\left(\frac{(p-1)^2}{p(p-2)}\right) \\ &
= 1/C_2 . \end{split} \]
Next, logarithmically differentiating $U(s)$ and evaluating at $s=0$, we obtain
\[\begin{split}
\frac{U'(0)}{U(0)} 
&= 
 \frac{\zeta'}{\zeta}(0) + \frac{\mathcal{G}'}{\mathcal{G}}(0) -\frac23\log{2}-2\frac{\zeta'}{\zeta}(2) - 1
 \\ & = \log 2\pi - 1, \end{split} \]
since
\[ \begin{split} \frac{\mathcal{G}'}{\mathcal{G}}(0) -\frac23\log{2}-2\frac{\zeta'}{\zeta}(2)
&= \sum_{p>2} \frac{-2p\log{p}}{(p-2)(p+1)^2\left(1+\frac{2}{(p-2)(p+1)} \right)} + \sum_{p>2} \frac{2 \log{p}}{p^2-1} \\& = 0. \end{split} \]
Therefore we conclude by \eqref{BCchoice} and \eqref{B} that
\begin{equation} C = \frac12\left( 1 -\gamma -\log{2\pi}\right) .\end{equation}

We now see that $E(x)$ defined by \eqref{Edef} together with the requirement that $\mathcal{E}(s)$ is analytic in $\sigma \ge -\frac12$ agrees with the definition we gave of $E(x)$ in \eqref{E(x)}.

%%%%%%%%%%%%%%%%%%%%%%%%%%%%%%%%
\section{Landau's Oscillation Theorem and the Completion of the Proof of Theorem 1.}

Landau proved in 1905 that a Dirichlet series with abscissa of convergence $\sigma_c$ which has non-negative coefficients must have a singularity on the real axis at $s=\sigma_c$. From this one sees that if a Dirichlet series with real coefficients does not have a singularity on the real axis at $s=\sigma_c$ then the coefficients must infinitely often take on positive and negative values. This contrapositive form of Landau's theorem is called an oscillation theorem. Some standard references for this method are \cite[Chapter 5]{Ingham1932}, \cite[Chapter 11]{BatemanDiamond2004}, and \cite[Section 15.1]{MontgomeryVaughan2007}.

We will make use of the following form of Landau's theorem which is Lemma 15.1 in \cite{MontgomeryVaughan2007}.

\begin{lemma} \label{Lemma4.1} Let $A(x)$ be a bounded Riemann-integrable function on any finite interval $1\le x\le X$, with $A(x)\ge 0$ for all $x> X_0$. Let $\sigma_c$ be the infimum of the $\sigma$ for which $\int_{X_0}^\infty A(x) x^{-\sigma}\, dx < \infty$. Then the function
\[ F(s) = \int_1^\infty \frac{A(x)}{x^s} \, dx \]
is analytic in the half-plane $\sigma > \sigma_c$ but not at the point $s=\sigma_c$. 
\end{lemma}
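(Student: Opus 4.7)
\medskip

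The plan is to prove the two halves separately: analyticity for $\sigma>\sigma_c$ is routine, while the non-analyticity at $s=\sigma_c$ uses a Taylor expansion argument combined with a Tonelli-type interchange that exploits the positivity of $A(x)$ on $(X_0,\infty)$.

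For the first half, I would split $F(s) = \int_1^{X_0} A(x) x^{-s}\,dx + \int_{X_0}^\infty A(x) x^{-s}\,dx$. The first piece is an entire function of $s$ since $A$ is bounded and Riemann-integrable on $[1,X_0]$. For the tail, non-negativity together with the definition of $\sigma_c$ as the infimum of $\sigma$ for which the tail converges shows absolute convergence for every $\sigma>\sigma_c$, and uniform convergence on any strip $\sigma\ge\sigma_c+\delta$. By Morera or by standard differentiation under the integral sign, the tail defines an analytic function on $\{\sigma>\sigma_c\}$.

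For the main assertion, I argue by contradiction: suppose $F(s)$ extends analytically to a neighbourhood of $s=\sigma_c$. Choose a real $s_0>\sigma_c$; then $F$ is analytic on the disk of some radius $R>s_0-\sigma_c$ around $s_0$, so in particular the Taylor series $F(s)=\sum_{n\ge 0}\frac{F^{(n)}(s_0)}{n!}(s-s_0)^n$ converges at some real point $\sigma_1<\sigma_c$. Differentiation under the integral (valid for $\sigma>\sigma_c$) gives
\[
F^{(n)}(s_0) = (-1)^n\int_1^\infty A(x)(\log x)^n x^{-s_0}\,dx,
\]
so evaluating the Taylor series at $s=\sigma_1$ produces
\[
F(\sigma_1) = \sum_{n=0}^\infty \frac{(s_0-\sigma_1)^n}{n!}\int_1^\infty A(x)(\log x)^n x^{-s_0}\,dx,
\]
and the series converges absolutely by hypothesis. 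The contribution from the bounded interval $[1,X_0]$ is a convergent series of bounded quantities; for the tail $x>X_0$ the integrand $A(x)(\log x)^n x^{-s_0}$ is non-negative and $(s_0-\sigma_1)^n>0$, so Tonelli's theorem permits interchanging the sum and the integral on $(X_0,\infty)$, yielding
\[
\int_{X_0}^\infty A(x) x^{-s_0}\sum_{n=0}^\infty \frac{((s_0-\sigma_1)\log x)^n}{n!}\,dx = \int_{X_0}^\infty A(x) x^{-\sigma_1}\,dx <\infty.
\]
Since $\sigma_1<\sigma_c$, this contradicts the definition of $\sigma_c$ as the infimum of $\sigma$ for which $\int_{X_0}^\infty A(x)x^{-\sigma}\,dx$ converges, completing the proof.

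The only delicate point is the Tonelli interchange. What makes it work cleanly is that the non-negativity hypothesis on $A$ is imposed precisely on the tail $(X_0,\infty)$, exactly where the interchange is needed; the finite piece $[1,X_0]$, where $A$ may change sign, contributes only analytically harmless finite sums because both $|A(x)|$ and $(\log x)^n$ remain bounded there. Verifying term-by-term differentiation of $F$ at $s_0$ (needed to identify the Taylor coefficients with the integrals above) is standard and follows from uniform convergence of $\int_1^\infty A(x)(\log x)^n x^{-\sigma}\,dx$ on compact subsets of $\{\sigma>\sigma_c\}$.
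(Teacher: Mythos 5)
Your proof is correct; note that the paper does not prove this lemma at all but simply quotes it as Lemma 15.1 of Montgomery--Vaughan, and your argument is essentially the standard Landau proof given there (and in Ingham): split the integral at $X_0$, expand $F$ in a Taylor series about a real point $s_0>\sigma_c$, use differentiation under the integral sign to identify the coefficients, and use the nonnegativity of $A$ on $(X_0,\infty)$ to interchange sum and integral and force convergence of $\int_{X_0}^\infty A(x)x^{-\sigma_1}\,dx$ for some $\sigma_1<\sigma_c$, a contradiction. The only step you assert rather than verify is that analyticity at $\sigma_c$ makes the radius of convergence of the Taylor series at $s_0$ strictly exceed $s_0-\sigma_c$ (a short geometric argument since the singularities lie in $\sigma\le\sigma_c$ outside a disk about $\sigma_c$); this is standard and not a genuine gap.
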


Using this we will prove Theorem 1 in the following quantitative form. Let $\rho_1$ be the first zero of the Riemann zeta-function that one encounters when moving up the half-line from the real axis; it is known that $\rho_1 = 1/2 + i \gamma_1 $ is a simple zero, and $\gamma_1 = 14.134725\ldots$.
\begin{theorem} \label{thm2} We have 
\begin{equation}\label{Theorem2} \limsup_{x\to \infty} \frac{E(x)}{x^{1/4}} \ge |c_1|, \qquad \text{and} \qquad \liminf_{x\to \infty} \frac{E(x)}{x^{1/4} }\le - |c_1| \end{equation}
where, letting 
\begin{equation} \label{s1} s_1 = \frac{\rho_1}{2}-1 = -\frac34 +i\frac{\gamma_1}{2},\end{equation} 
\begin{equation} \label{c1} c_1 := \left(\frac{4C_2}{2^{s_1+1}+1} \right) \frac{ \zeta(s_1)\zeta(s_1+1)}{2\zeta'(\rho_1)s_1(s_1+1) }\mathcal{G}(s_1).\end{equation}
\end{theorem}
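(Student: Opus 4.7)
The plan is to apply Landau's oscillation theorem (Lemma~\ref{Lemma4.1}) to a carefully chosen non-negative auxiliary function whose Mellin transform has a real-axis pole with \emph{negative} residue. The singularity of $\mathcal{E}(s)$ closest to the right half-plane lies at $s_1=-3/4+i\gamma_1/2$, off the real axis; the standard device is therefore to translate it onto the real axis by working with the shifted Mellin $\mathcal{E}(s+i\gamma_1/2)$. Because $\rho_1$ is a simple zero of $\zeta$, the factor $\zeta(2s+2)$ has a simple zero at $s_1$, and a residue calculation using the formula of Lemma~\ref{G(s)lem} shows that $\mathcal{E}(s+i\gamma_1/2)$ has a simple pole at $s=-3/4$ of residue precisely the constant $c_1$ of \eqref{c1}.

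For the $\limsup$ assertion, suppose for contradiction that $\limsup_{x\to\infty}E(x)/x^{1/4}<|c_1|$; then $E(x)\le(|c_1|-\varepsilon)x^{1/4}$ for some $\varepsilon>0$ and all $x>X_0$. Choose $\phi=-\arg c_1$ so that $e^{i\phi}c_1=|c_1|$, and set
\[
A(x):=\bigl(1+\cos\bigl(\tfrac{\gamma_1}{2}\log x-\phi\bigr)\bigr)\bigl((|c_1|-\tfrac{\varepsilon}{2})x^{1/4}-E(x)\bigr).
\]
Both factors are non-negative for $x>X_0$, so $A(x)\ge 0$ there. Expanding $1+\cos$ in exponentials and invoking the identities
\[
\int_1^\infty\frac{x^{1/4\pm i\gamma_1/2}}{x^{s+2}}\,dx=\frac{1}{s+3/4\mp i\gamma_1/2},\quad \int_1^\infty\frac{E(x)\,x^{\pm i\gamma_1/2}}{x^{s+2}}\,dx=\mathcal{E}(s\mp i\gamma_1/2),
\]
one sees that $\mathcal{A}(s):=\int_1^\infty A(x)x^{-s-2}\,dx$ continues meromorphically to $\sigma>-1$.

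Three of the four resulting Mellin pieces are analytic at the real point $s=-3/4$: $\mathcal{E}(s)$ is analytic there (no real zero of $\zeta$ in the critical strip), and the two $x^{1/4}\cos(\cdots)$ contributions have poles only at $-3/4\pm i\gamma_1/2$. The surviving pieces $(|c_1|-\tfrac{\varepsilon}{2})/(s+3/4)$ and $-\mathrm{Re}\bigl(e^{i\phi}\mathcal{E}(s+i\gamma_1/2)\bigr)$ produce a simple pole at $s=-3/4$ of residue $(|c_1|-\tfrac{\varepsilon}{2})-\mathrm{Re}(e^{i\phi}c_1)=-\varepsilon/2<0$. Since $\rho_1$ is simple and $\gamma_1$ is the smallest positive ordinate of a zeta zero, no other $\rho$ yields a real pole of the shifted Mellin either, so $s=-3/4$ is the \emph{only} real singularity of $\mathcal{A}(s)$ in $\sigma>-1$.

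Now apply Lemma~\ref{Lemma4.1}: the abscissa of convergence $\sigma_c$ of $\int_{X_0}^\infty A(x)x^{-s-2}\,dx$ must be a real singularity of $\mathcal{A}$, and $\mathcal{A}$ must be analytic in $\sigma>\sigma_c$. Any zero of $\zeta$ with $\beta>1/2$ would place a complex pole of $\mathcal{A}$ in the open half-plane $\sigma>-3/4$ with no real singularity available to play the role of $\sigma_c$, already contradicting the lemma; hence $\sigma_c=-3/4$ and $\mathcal{A}$ is analytic in $\sigma>-3/4$. For real $s>-3/4$ the tail $\int_{X_0}^\infty A(x)x^{-s-2}\,dx$ is non-negative and the $[1,X_0]$ contribution is bounded, so $\mathcal{A}(s)\ge-C$; but the negative residue forces $\mathcal{A}(s)\to-\infty$ as $s\to(-3/4)^+$, the desired contradiction. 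Running the symmetric construction with $\phi$ replaced by $\phi+\pi$ against the hypothesis $\liminf E(x)/x^{1/4}>-|c_1|$ yields the companion inequality. The delicate point is the residue bookkeeping at $s=-3/4$: the choice $\phi=-\arg c_1$ aligns the shifted-Mellin residue with $+|c_1|$, while the coefficient $|c_1|-\varepsilon/2$ on $x^{1/4}$ is calibrated so that the net residue is negative and of size $\varepsilon/2$, which is what activates Landau's contradiction.
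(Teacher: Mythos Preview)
Your argument is correct and complete, but it follows a genuinely different route from the paper's. The paper does not translate the complex pole onto the real axis with a cosine weight; instead it works directly with the single unweighted function $H(s)=\int_1^\infty (cx^{1/4}-E(x))x^{-s-2}\,dx=\frac{c}{s+3/4}-\mathcal{E}(s)$ for a fixed $c<|c_1|$. Positivity of the integrand for $x>X_0$ gives the key pointwise domination $|H(\sigma+it)|\le H(\sigma)+O(1)$ for $\sigma>-3/4$, and then one simply multiplies by $(\sigma+3/4)$, sets $t=\gamma_1/2$, and lets $\sigma\to(-3/4)^+$: the left side tends to $|c_1|$ (the modulus of the residue of $\mathcal{E}$ at $s_1$) while the right side tends to $c$, producing $|c_1|\le c$, a contradiction. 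Your Fej\'er-type construction with $1+\cos(\tfrac{\gamma_1}{2}\log x-\phi)$ is the other classical device for this kind of oscillation result; it trades the inequality $|H(\sigma+it)|\le H(\sigma)+O(1)$ for an explicit real-axis pole with calibrated negative residue, at the cost of tracking three Mellin pieces instead of one and having to rule out extraneous real singularities of the shifted transforms (which you do, using that $\rho_1$ is the unique simple zero of ordinate $\gamma_1$). The paper's approach is a little shorter and avoids that bookkeeping; your approach makes the application of Landau's lemma more literal, since the singularity producing the contradiction actually sits on the real axis. Both deliver the same quantitative constant $|c_1|$.
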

Using Mathematica we found that $|c_1| = 0.085\ldots$. We will follow the proof given in Lemma 5 of \cite{GV1996} for a similar problem, which is also similar to the proof of Theorem 15.3 in \cite {MontgomeryVaughan2007}, but at the end of the proof we revert to Ingham's argument \cite{Ingham1932} in proving his Theorem 33. 

\begin{proof}[Proof of Theorem \ref{thm2}]
We saw in the last section that $\mathcal{E}(s)$ is analytic for $\sigma \ge -1/2$, and its singularities for $-1 < \sigma < -1/2$ are at the points $ \rho/2-1$ and therefore $\mathcal{E}(s)$ is analytic on the real axis for $ \sigma >-1$.
We now take a constant $c$ satisfying
\begin{equation} \label{c_assump}
0<c<|c_1|,
\end{equation}
where $c_1$ is as defined in \eqref{c1}.

Assume that there is a $X_0 \ge 1$ such that
\begin{equation} \label{Assume} E(x) \le cx^{1/4}, \qquad \text{when} \quad x\ge X_0. \end{equation}
Letting 
\begin{equation} \label{H(s)} H(s) = \int_1^\infty \frac{cx^{1/4} - E(x) }{x^{s+2}}\, dx \end{equation}
we denote the abscissa of convergence of $H(s)$ by $\theta$ and have by \eqref{Emellin} that for $\sigma >\max(-1, \theta)$ 
\begin{equation} \label{H(s)formula} H(s) = \frac{c}{s+ \frac34} -\mathcal{E}(s). \end{equation}
By Lemma \ref{Lemma4.1}, $H(s)$ has a singularity at $s=\theta$, and so
$\theta \le -3/4$. Thus \eqref{H(s)} and \eqref{H(s)formula} hold for $\sigma > -3/4$. 
By \eqref{Assume} and \eqref{H(s)}, we have, for $\sigma > -3/4$, 
\begin{equation} \label{FirstStep}\begin{split} |H(\sigma + it)| &\le \int_1^{X_0} \frac{|cx^{1/4} - E(x)| }{x^{\sigma+2}}\, dx + \int_{X_0}^\infty \frac{cx^{1/4} - E(x) }{x^{\sigma+2}}\, dx \\ &
= \int_1^{X_0} \frac{|cx^{1/4} - E(x)|-(cx^{1/4} - E(x)) }{x^{\sigma+2}}\, dx + H(\sigma)
\\ &
\le 2\int_1^{X_0} \frac{|cx^{1/4} - E(x)| }{x^{\sigma+2}}\, dx +H(\sigma) \\ & = H(\sigma) +O(1).
\end{split}
\end{equation}

The idea used to complete the proof is that the zero $\rho_1$ creates a simple pole for $F(s)$ and therefore for $\mathcal{E}(s)$ and by \eqref{H(s)formula} for $H(s)$ on the line $\sigma = -3/4$ at the point $s_1 = -3/4+ i\gamma_1/2$. But by \eqref{FirstStep} the residue $c$ of the pole of $H(s)$ on the real axis at $\sigma=-3/4$ must be at least as large as the absolute value of the residue of the pole at $s_1$. 

To prove this, we first compute the residue of $\mathcal{E}(s)$ at $s_1$; by \eqref{G(s)lemma} and \eqref{Emellin} 
\[\begin{split} \lim_{\sigma \to {-3/4}^+} (\sigma+3/4)\mathcal{E}(\sigma + i\gamma_1/2) &=
 \left(\frac{4C_2}{2^{s_1+1}+1} \right) \frac{ \zeta(s_1)\zeta(s_1+1)}{s_1(s_1+1) }\mathcal{G}(s_1) \lim_{\sigma \to {-3/4}^+} \frac{ \sigma+3/4}{\zeta(2\sigma +2 +i\gamma_1)}\\&
= \left(\frac{4C_2}{2^{s_1+1}+1} \right) \frac{ \zeta(s_1)\zeta(s_1+1)}{2\zeta'(\rho_1)s_1(s_1+1) }\mathcal{G}(s_1) = c_1,\end{split}\] 
where we used L'H{\^o}pital's rule to evaluate the limit.

Next, in \eqref{H(s)formula} taking $t= \gamma_1/2$ and multiplying both sides by $ \sigma + 3/4$, we obtain 
\[ \lim_{\sigma \to {-3/4}^+} (\sigma+3/4)H(\sigma + i\gamma_1/2)=-c_1 ,\]
and therefore by \eqref{FirstStep}
\[ \begin{split} |c_1| = \left|\lim_{\sigma \to {-3/4}^+} (\sigma+3/4)H(\sigma + i\gamma_1/2)\right| &\le \lim_{\sigma \to {-3/4}^+}(\sigma+3/4) \left|H(\sigma + i\gamma_1/2)\right|\\&
\le \lim_{\sigma \to {-3/4}^+}(\sigma+3/4)(H(\sigma) +O(1)) \\&
= c,\end{split} \]
where we used \eqref{H(s)formula} in the last line. However this contradicts \eqref{c_assump},
and hence \eqref{Assume} is false and
\[\limsup_{x\to \infty} \frac{E(x)}{x^{1/4}} \ge c, \]
for any $c< |c_1|$.
This proves the first inequality in \eqref{Theorem2}, and a concomitant argument proves the second inequality.
\end{proof}

%%%%%%%%%%%%%%%%%%%%%%%%%%%%%%%%%%%%%%%%%%%%%%%%%%%%%%%%%%%%%%%%\section{A conditional improvement}

%%%%%%%%%%%%%%%%%%%%%%%%%%%%%%%%
%\bibliographystyle{amsplain}

\end{document}